\newtheorem{theorem}{Theorem}[section]
\newtheorem{lemma}{Lemma}[section]
\newtheorem{proposition}[theorem]{Proposition}
\newtheorem{rem}{Remark}
\definecolor{darkgreen}{cmyk}{1,0,1,.2}
\renewcommand\paragraph[1]{\medskip\textbf{#1} }
\begin{document}
\title[EISENSTEIN--PICARD MODULAR GROUP]{GENERATORS OF THE EISENSTEIN--PICARD MODULAR GROUP IN THREE COMPLEX DIMENSIONS}

\author{BaoHua Xie} \address{College of Mathematics and Econometrics \\Hunan University\\ Changsha, 410082, China}
\email{xiexbh@gmail.com}

\author{JieYan Wang} \address{College of Mathematics and Econometrics \\Hunan University\\ Changsha, 410082, China}
\email{jywang@hnu.edu.cn}

\author{YuePing Jiang} \address{College of Mathematics and Econometrics \\Hunan University\\ Changsha, 410082, China}
\email{ypjiang731@163.com}
\subjclass[2000]{Primary 32M05, 22E40; Secondary 32M15}

\keywords{Complex hyperbolic space, Picard modular groups, Generators.}

\begin{abstract}
Little is known about the generators system of the higher dimensional Picard
modular groups. In this paper, we prove that the higher dimensional Eisenstein--Picard
modular group $\mathbf{PU}(3,1;\mathbb{Z}[\omega_3])$ in three complex dimensions can be generated by four
given transformations.
\end{abstract}

\date{\today}

\maketitle
\section{Introduction}
As the complex hyperbolic analogue of Bianchi groups
$\mathbf{PSL}(2;{\mathcal O}_{d})$, Picard modular groups are
$\mathbf{PU}(n,1;{\mathcal O}_{d})$ where
${\mathcal O}_{d}$ is the ring of algebraic integers of the imaginary quadratic
extension ${\bf Q}(i\sqrt{d})$ for any positive square free integer $d$ (see \cite{HO}).
The elements of the ring ${\mathcal O}_{d}$ can be described (see \cite{HW}):
$$
{\mathcal O}_{d}=\left\{
\begin{array}{ll}
{\bf Z}[i\sqrt{d}] & \mbox{ if } d\equiv 1,2\ (\mbox{mod }4)\\
{\bf Z}[\frac{1+i\sqrt{d}}{2}] & \mbox{ if } d\equiv 3\ \ (\mbox{mod }4).
\end{array}
\right.
$$
It is well known that the ring ${\mathcal O}_{d}$ is Euclidean for positive square free integer $d$ if and only if $d=1, 2, 3, 7, 11$; see \cite{ST}.
In particular, if $d=3,{\mathcal O}_{d}=\mathbb{Z}[\omega_3]$, where $\omega_3=(-1+i\sqrt{3})/2$.
Picard modular groups are the simplest arithmetic lattices in $\mathbf{PU}(n,1)$.

There are many results on Picard modular groups in two complex dimensions.
In \cite{FP}, Falbel and Parker constructed a remarkable simple fundamental domain
of the Eisenstein--Picard modular group $\mathbf{PU}(2,1;\mathbb{Z}[\omega_3])$.
Applying Poincar\'{e} polyhedra theorem, they showed that $\mathbf{PU}(2,1;\mathbb{Z}[\omega_3])$ admits a presentation with two generators.
Similarly, in \cite{FFP}, they obtained a presentation of the Gauss--Picard modular group $\mathbf{PU}(2,1;{\mathcal O}_{1})$.
Francsics and Lax also independently obtained the generators of the Gauss--Picard modular group
acting on the two-dimensional complex hyperbolic space (see \cite{FL1,FL2,FL3}).

However, constructing explicit fundamental domains in complex hyperbolic space is much more difficult than in real hyperbolic space.
Therefore, it is interesting to look for another method to get generators system of Picard modular groups.
In \cite{FFLP}, Falbel et al. gave a simple algorithm to obtain the generators of the Gauss--Picard modular group.
Wang et al. \cite{wxx} showed that this algorithm can also be extended to the Eisenstein--Picard modular group $\mathbf{PU}(2,1;\mathbb{Z}[\omega_3])$.
But it is still open for other Picard modular groups.
Recently, Zhao \cite{Zhao} obtained the generators of the Euclidean Picard modular groups $\mathbf{PU}(2,1;\mathcal {O}_d)$ for $d=2,7,11$ by using a different method.
We note that very little is known about the geometric and algebraic properties, e.g., explicit fundamental domains, generators, presentations of  the higher dimensional Picard modular groups $\mathbf{PU}(n,1;{\mathcal O}_{d})$.

In the present paper, we prove that the method of \cite{FFLP} can also be applied to the higher dimensional Eisenstein--Picard
modular group $\mathbf{PU}(3,1;\mathbb{Z}[\omega_3])$ and obtain a simple description in terms of generators.
More precisely, we prove that the Eisenstein--Picard modular group $\mathbf{PU}(3,1;\mathbb{Z}[\omega_3])$ can be generated by four
transformations, one Heisenberg translation, two Heisenberg rotations and an involution.

This paper is organized as follows. Section 2 gives a brief introduction to  complex hyperbolic geometry
and the Picard modular group. Section 3 contains the main
result and its proof.

\section{Preliminaries}
In this section, we recall some basic materials in complex hyperbolic geometry and Picard modular group.
The general reference on these topics are \cite{Go,Par}.

Let $\mathbb{C}^{n,1}$ denote the vector space $\mathbb{C}^{n+1}$ equipped with the Hermitian form
$$\langle \mathbf{w},\mathbf{z}\rangle=z_{1}\overline{w_{n+1}}+z_{2}\overline{w_{2}}+\ldots +z_{n}\overline{w_{n}}+z_{n+1}\overline{w_{1}}$$
where   $\mathbf{w}$ and $\mathbf{z}$ are the column vectors in $\mathbb{C}^{n,1}$ with entries $z_{1},z_{2},\ldots,z_{n},z_{n+1}$ and $w_{1},w_{2},\ldots,w_{n},w_{n+1}$ respectively. Equivalently, we may write
$$\langle \mathbf{w},\mathbf{z}\rangle=\mathbf{z}^{*}J\mathbf{w}$$ where $\mathbf{w}^{*}$ denote the Hermitian transpose of $\mathbf{w}$ and
\begin{equation*}
J=\left(\begin{array}{ccc}
0 & 0& 1\\
0 & I_{n-1}& 0\\
1 & 0& 0
\end{array}\right).
\end{equation*}

Consider the following subspaces of $\mathbb{C}^{n,1}$:
$$
V_{-}=\{\mathbf{v}\in \mathbb{C}^{n,1}: \langle \mathbf{v},\mathbf{v}\rangle<0\},
$$
$$
V_{0}=\{\mathbf{v}\in \mathbb{C}^{n,1}-\{0\}: \langle
\mathbf{v},\mathbf{v}\rangle=0\}.
$$

Let $\mathbb{P}: \mathbb{C}^{n,1}-\{0\}\rightarrow \mathbb{C}P^{n}$ be the canonical projection onto complex projective space.
Then the {\it complex hyperbolic $n$-space} is defined to be $\mathbf{H}_{\mathbb{C}}^{n}=\mathbb{P}(V_{-})$.  The boundary of the complex hyperbolic $n$-space $\mathbf{H}_{\mathbb{C}}^{n}$ consists of those points in $\mathbb{P}(V_{0})$ together with
a distinguished point at infinity, which denote $\infty$.
The finite points in the boundary of $\mathbf{H}_{\mathbb{C}}^{n}$ naturally
carry the structure of the generalized Heisenberg group (denoted by $\mathcal {H}_{2n-1}$),
which is defined to $\mathbb{C}^{n-1}\times \mathbb{R}$ with the group law
$$
(\xi,\nu)\cdot(z,u)=(\xi+z,\nu+u+2\Im\langle\langle \xi,z \rangle\rangle)
.$$
Here $\langle\langle \xi,z \rangle\rangle=z^{*}\xi$ is the standard positive defined Hermitian form on $\mathbb{C}^{n-1}$. In particular,
we write $\| \xi\|^{2}=\xi^{*}\xi$.

Motivated by this, we define horospherical coordinates on complex hyperbolic space.
To each point $(\xi,\nu, u)\in \mathcal {H}_{2n-1}\times \mathbb{R}_{+}$,
we associated a point $\psi(\xi,\nu, u)\in V_{-}$.
Similarly, $\infty$ and each point $(\xi,\nu, 0)\in \mathcal {H}_{2n-1}\times \{0\}$ is associated to a point in $V_{0}$ by $\psi$.
The map $\psi$ is given by
\begin{equation*}
\psi(\xi,\nu, u)=\left(\begin{array}{c}
(-|\xi|^{2}-u+i\nu)/2\\
\xi\\
1
\end{array}\right),\
\psi(\infty)=\left(\begin{array}{c}
1\\
0\\
\vdots\\
0
\end{array}\right).
\end{equation*}
We also define the origin $0$ to be the point in $\partial\mathbf{H}_{\mathbb{C}}^{n}$  with horospherical coordinates $(0,0,0)$.
We have
\begin{equation*}
\psi(0)=\left(\begin{array}{c}
0\\
0\\
\vdots\\
1
\end{array}\right).
\end{equation*}

The holomorphic isometry group of $\mathbf{H}_{\mathbb{C}}^{n}$ is the group $\mathbf{PU}(n,1)$ of complex
linear transformations, which preserve the above Hermintian form.
That is, for each element $G\in \mathbf{PU}(n,1)$, $G$ is unitary with respect to  $\langle \cdot,\cdot\rangle.$ The corresponding matrix $G=(g_{jk})^n_{i,j=1}$ satisfies the
following condition
\begin{equation}
G^*JG=J,
\end{equation}
where $G^*$ denote the conjugate transpose of the matrix $G$. Picard modular groups for $\mathcal{O}_d$, denoted by $\mathbf{PU}(n,1;\mathcal{O}_d)$, are the subgroups of  $\mathbf{PU}(n,1)$  with entries
in $\mathcal{O}_d$.

 \begin{rem}
 In this paper, the matrixes corresponding to the generators do not always have determinant 1. In fact, the generators  belong to the group
$\mathbf{U}(3,1; \mathbb{Z}[\omega_3])$. In relation to complex hyperbolic isometries, the relevant group is $\mathbf{PU}(3,1; \mathbb{Z}[\omega_3])=\mathbf{U}(3,1; \mathbb{Z}[\omega_3])/\pm I$. By abuse of notation, we will denote the Eisenstein--Picard modular group in three complex dimensions by $\mathbf{PU}(3,1; \mathbb{Z}[\omega_3])$ or $\mathbf{U}(3,1; \mathbb{Z}[\omega_3])$.
 We thank referee for pointing out this.
\end{rem}

   We now discuss the decomposition of complex hyperbolic isometries. We begin by considering those elements fixing $0$ and $\infty$.

The matrix group $\mathbf{U}(n-1)$ acts by Heisenberg rotation. In horospherical coordinates,
the action of $U\in\mathbf{U}(n-1)$ is given by
$$(\xi,\nu, u)\longmapsto (U\xi,\nu, u).$$
The corresponding matrix in $\mathbf{U}(n,1)$ acting on $\mathbb{C}^{n,1}$ is
\begin{equation*}
M_U\equiv\left(\begin{array}{ccc}
1 & 0& 0\\
0 & U& 0\\
0 & 0& 1
\end{array}\right).
\end{equation*}

The positive real numbers $r\in \mathbb{R}^{+}$ act by Heisenberg dilation. In horospherical coordinates, this acting is given by
$$(\xi,\nu, u)\longmapsto (r\xi,r^{2}\nu, r^{2}u).$$ In $\mathbf{U}(n,1)$ the corresponding matrix is
\begin{equation*}
A_r\equiv\left(\begin{array}{ccc}
r & 0& 0\\
0 & I_{n-1}& 0\\
0 & 0& 1/r
\end{array}\right).
\end{equation*}

The Heisenberg group acts by Heisenberg translation. For $(\tau,t)\in \mathcal {H}_{2n-1}$, this is
$$N_{(\tau,t)}(\xi,\nu)=(\tau+\xi, t+\nu+2\Im\langle\langle \tau, \xi \rangle\rangle).$$
As a matrix $N_{(\tau,t)}$ is given by
\begin{equation*}
N_{(\tau, t)}\equiv\left(\begin{array}{ccc}
1& -\tau^{*} & (-\|\tau\|^{2}+it)/2\\
0 & I_{n-1}& \tau\\
0 & 0& 1
\end{array}\right).
\end{equation*}

Heisenberg translations, rotations and dilations generate the Heisenberg similarity group. This is the full subgroup of $\mathbf{U}(n,1)$ fixing $\infty$.

Finally, there is one more important acting, called an inversion $R$, which interchanges $0$ and $\infty$. In matrix notation this map is
\begin{equation*}
R\equiv\left(\begin{array}{ccc}
0& 0& 1\\
0 & -I_{n-1}& 0\\
1 & 0& 0
\end{array}\right).
\end{equation*}

Let $\Gamma_{\infty}$ be the stabilizer subgroup of $\infty$ in $\mathbf{U}(n,1)$. That is
$$\Gamma_{\infty}\equiv\{g\in \mathbf{U}(n,1):\ g(\infty)=\infty \}.$$

\begin{lemma}
Let $G=(g_{jk})^4_{j,k=1}\in \mathbf{U}(3,1)$. Then
$G\in\Gamma_{\infty}$ if and only if $g_{41}=0$.
\end{lemma}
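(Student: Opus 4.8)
The plan is to reduce everything to the action of $G$ on the single null vector $\psi(\infty)=(1,0,0,0)^{T}$, and then to exploit the fact that $G$ preserves the Hermitian form in order to upgrade the scalar condition $g_{41}=0$ into the full statement that the first column of $G$ is a complex multiple of $\psi(\infty)$.

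First I would record the easy direction. Since $\infty=\mathbb{P}(\psi(\infty))$ and $\psi(\infty)=(1,0,0,0)^{T}=:e_{1}$, the image $G\psi(\infty)$ is precisely the first column $v=(g_{11},g_{21},g_{31},g_{41})^{T}$ of $G$. Hence $G(\infty)=\infty$ holds exactly when $v$ is a nonzero scalar multiple of $e_{1}$, i.e. when $g_{21}=g_{31}=g_{41}=0$. In particular $G\in\Gamma_{\infty}$ immediately forces $g_{41}=0$, which settles one implication.

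The content of the lemma is the converse. Here I would use that $e_{1}$ is null together with $G^{*}JG=J$: from $\langle e_{1},e_{1}\rangle=e_{1}^{*}Je_{1}=0$ we get $\langle v,v\rangle=\langle Ge_{1},Ge_{1}\rangle=0$. Writing this out with the explicit $4\times 4$ form of $J$ for $n=3$, a direct computation gives
\[
\langle v,v\rangle = v^{*}Jv = 2\,\Re\!\big(\overline{g_{11}}\,g_{41}\big)+|g_{21}|^{2}+|g_{31}|^{2}=0.
\]
Now assuming $g_{41}=0$, the cross term vanishes and the identity collapses to $|g_{21}|^{2}+|g_{31}|^{2}=0$, which forces $g_{21}=g_{31}=0$. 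Thus the first column of $G$ equals $(g_{11},0,0,0)^{T}$; invertibility of $G$ then rules out $g_{11}=0$, so $v$ is a nonzero multiple of $e_{1}$ and $G(\infty)=\infty$, i.e. $G\in\Gamma_{\infty}$.

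I do not anticipate any real obstacle: the only point that must be handled carefully is the structure of $J$, so that killing the coordinate $g_{41}$ degenerates the indefinite form $\langle v,v\rangle$ into a \emph{positive definite} expression in the middle coordinates $g_{21},g_{31}$. The positive definiteness then does all the work, turning one scalar equation into two. This is exactly where the assumption $G\in\mathbf{U}(3,1)$, as opposed to an arbitrary $G\in\mathbf{GL}(4,\mathbb{C})$, is essential.
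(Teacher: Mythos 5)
Your proof is correct, and the paper in fact states this lemma without giving any proof, so your argument supplies exactly the standard justification one would expect: the first column of $G$ is $G\psi(\infty)$, it is null because $G^{*}JG=J$, and setting $g_{41}=0$ degenerates $v^{*}Jv=2\Re(\overline{g_{11}}g_{41})+|g_{21}|^{2}+|g_{31}|^{2}=0$ into a positive definite condition forcing $g_{21}=g_{31}=0$. No gaps; this is consistent with how the lemma is used later in the paper (e.g.\ the claim that $G$ maps $\infty$ to $(g_{11}/g_{41},g_{21}/g_{41},g_{31}/g_{41})$ when $g_{41}\neq 0$).
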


Using Langlands decomposition, any element
$P\in\Gamma_{\infty}$ can be decomposed as a product of a Heisenberg
translation, dilation, and a rotation:
\begin{equation}\label{DeofP}
P= N_{(\tau,t)} A_{r} M_{U} =\left(
            \begin{array}{ccc}
             r& -\tau^{*}U & (-\|\tau\|^2+it)/2r \\
              0 & U & \tau/r \\
              0 & 0 & 1/r \\
            \end{array}
          \right),
\end{equation}
The parameters satisfy the corresponding conditions. That is, $U\in \mathbf{U}(n-1),r\in \mathbb{R}^{+}$ and $(\tau,t)\in \mathcal {H}_{2n-1}$.

\section{The  main result and its proof}
In this section we extend the method in \cite{FFLP} to the higher dimensional Eisenstein--Picard
modular group $\mathbf{U}(3,1; \mathbb{Z}[\omega_3])$.

Let $\mathbf{U}(2; \mathbb{Z}[\omega_{3}])$ be the unitary group $\mathbf{U}(2)$ over the ring $\mathbb{Z}[\omega_{3}]$. Recall that the
unitary matrix $A\in\mathbf{U}(2)$ is of the following form
\begin{equation*}
\mathbf{U}(2)=\left\{A=\left(\begin{array}{cc}
a & b\\
 -\lambda \overline{b}& \lambda \overline{a}
\end{array}\right):  |\lambda|=1, |a|^{2}+|b|^{2}=1 \right\}.
\end{equation*}
Then we can see that the elements in $\mathbf{U}(2; \mathbb{Z}[\omega_{3}])$ are of the following form
\begin{equation*}
\left(\begin{array}{cc}
a & 0\\
 0& b
\end{array}\right),
\left(\begin{array}{cc}
0 & b\\
 a& 0
\end{array}\right)
\end{equation*} where $a,b=\pm 1,\pm \omega_{3},\pm \omega_{3}^{2}$.

It is  easy to find that
\begin{equation*}
\left\{\left(\begin{array}{cc}
a & 0\\
 0& b
\end{array}\right): a,b=\pm 1,\pm \omega_{3},\pm \omega_{3}^{2}\right\}
\end{equation*}
 can be generated by
\begin{equation*}
\left(\begin{array}{cc}
1 & 0\\
 0& -\omega_{3}
\end{array}\right),
\left(\begin{array}{cc}
-\omega_{3} & 0\\
 0& 1
\end{array}\right).
\end{equation*} We also note that
\begin{equation*}
\left(\begin{array}{cc}
0 & 1\\
 1& 0
\end{array}\right)
\left(\begin{array}{cc}
a & 0\\
 0& b
\end{array}\right)=\left(\begin{array}{cc}
0 & b\\
 a& 0
\end{array}\right),
\end{equation*} and
\begin{equation*}
\left(\begin{array}{cc}
0 & 1\\
 1& 0
\end{array}\right)
\left(\begin{array}{cc}
-\omega_3 &0\\
0& 1
\end{array}\right)\left(\begin{array}{cc}
0 & 1\\
 1& 0
\end{array}\right)
=\left(\begin{array}{cc}
1& 0\\
 0& -\omega_3
\end{array}\right).
\end{equation*}

Therefore we have the following result.
\begin{lemma}
$\mathbf{U}(2; \mathbb{Z}[\omega_{3}])$ can be generated by the following two unitary matrixes
\begin{equation*}
U_{1}=\left(\begin{array}{cc}
0 & 1\\
 1& 0
\end{array}\right),
 U_{2}=\left(\begin{array}{cc}
-\omega_3 &0\\
0& 1
\end{array}\right).
\end{equation*}
\end{lemma}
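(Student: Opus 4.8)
The plan is to first pin down the explicit shape of an arbitrary element of $\mathbf{U}(2;\mathbb{Z}[\omega_3])$, and then verify that the two listed matrices build every such element. The structural input is the fact that the units of the Eisenstein integers $\mathbb{Z}[\omega_3]$ form the cyclic group $\mu_6=\langle-\omega_3\rangle$ of sixth roots of unity. Indeed, writing the norm $N(z)=|z|^2=z\bar z$, which takes nonnegative integer values on $\mathbb{Z}[\omega_3]$ and vanishes only at $0$, the unitarity constraint $|a|^2+|b|^2=1$ in the displayed description of $\mathbf{U}(2)$ forces $\{N(a),N(b)\}=\{0,1\}$. Hence in each row exactly one entry is a unit and the other vanishes, and together with the $\mathbf{U}(2)$ shape this yields precisely the diagonal and anti-diagonal matrices with unit entries recorded above; I would take this explicit two-form list as the starting point.

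Next I would generate all diagonal matrices $\mathrm{diag}(a,b)$ with $a,b\in\mu_6$. Because $-\omega_3$ is a primitive sixth root of unity, its powers $(-\omega_3)^k$ run through all six units as $k=0,1,\dots,5$; consequently the powers $U_2^{\,k}=\mathrm{diag}((-\omega_3)^k,1)$ realize every matrix $\mathrm{diag}(a,1)$. The conjugation relation already displayed in the excerpt, $U_1U_2U_1=\mathrm{diag}(1,-\omega_3)$, shows that $\mathrm{diag}(1,-\omega_3)$ lies in $\langle U_1,U_2\rangle$, and its powers give every $\mathrm{diag}(1,b)$. Multiplying the two families, $\mathrm{diag}(a,1)\,\mathrm{diag}(1,b)=\mathrm{diag}(a,b)$, then produces all diagonal matrices in the group.

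Finally, I would recover the anti-diagonal matrices by left multiplication with $U_1$: since $U_1\,\mathrm{diag}(a,b)=\left(\begin{smallmatrix}0&b\\a&0\end{smallmatrix}\right)$ and every $\mathrm{diag}(a,b)$ has just been expressed through $U_1,U_2$, all anti-diagonal elements also lie in $\langle U_1,U_2\rangle$. Combined with the diagonal case this exhausts the list, giving $\langle U_1,U_2\rangle=\mathbf{U}(2;\mathbb{Z}[\omega_3])$. I expect the only genuinely delicate point to be the structural claim of the first paragraph, namely the reduction to the two-form list via the norm equation $N(a)+N(b)=1$; once that list is in hand, the generation argument is a short and routine verification, most of whose pieces are already assembled in the text preceding the lemma.
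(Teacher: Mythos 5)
Your proof is correct and follows essentially the same route as the paper, whose argument is the discussion preceding the lemma: reduce to the list of diagonal and anti-diagonal matrices with unit entries, generate the diagonal ones from $U_2$ and $U_1U_2U_1=\mathrm{diag}(1,-\omega_3)$, and obtain the anti-diagonal ones by left multiplication by $U_1$. The only difference is that you justify the reduction to that two-form list explicitly via the norm equation $N(a)+N(b)=1$, a step the paper leaves implicit.
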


Next, we consider the subgroup
$\Gamma_{\infty}$ of the Picard modular group
$\mathbf{U}(3,1;\mathbb{Z}[\omega_3])$.

\begin{lemma}Let
$\Gamma_{\infty}(3,1;\mathbb{Z}[\omega_3])$ denote the subgroup
$\Gamma_{\infty}$ of Picard modular group
$\mathbf{U}(3,1;\mathbb{Z}[\omega_3])$. Then any element
$P\in\Gamma_{\infty}(3,1;\mathbb{Z}[\omega_3])$ if and only if the
parameters in the Langlands decomposition of $P$ satisfy the
conditions
$$
r=1, t\in\sqrt{3}\mathbb{Z}, \tau=(\tau_{1},\tau_{2})^{T}\in\mathbb{Z}[\omega_3]^{2},
U \in \mathbf{U}(2; \mathbb{Z}[\omega_{3}])$$
the integers $t/\sqrt{3}$ and $\|\tau\|^2$ have the same
parity.
\end{lemma}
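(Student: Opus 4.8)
The plan is to compare the explicit Langlands form of $P$ given in \eqref{DeofP} entry by entry against the requirement that every entry lie in $\mathbb{Z}[\omega_3]$, read off the stated conditions, and then run the same computation backwards for the converse. Throughout I use that $\mathbb{Z}[\omega_3]\cap\mathbb{R}=\mathbb{Z}$, since $a+b\omega_3=(a-b/2)+(b\sqrt3/2)i$ is real exactly when $b=0$.

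First I would pin down $r$. With $n=3$ the displayed matrix for $P$ is $4\times4$, and its $(1,1)$ and $(4,4)$ entries are $r$ and $1/r$. Membership of $P$ in $\mathbf{U}(3,1;\mathbb{Z}[\omega_3])$ forces both into $\mathbb{Z}[\omega_3]$; but $r,1/r\in\mathbb{R}^{+}$, so by the remark above both are positive integers, whence $r=1$. Setting $r=1$ collapses the matrix: the middle $2\times2$ block becomes $U$ and the middle column block becomes $\tau$, so at once $\tau\in\mathbb{Z}[\omega_3]^2$ and $U$ has entries in $\mathbb{Z}[\omega_3]$; together with $U\in\mathbf{U}(2)$ this is precisely $U\in\mathbf{U}(2;\mathbb{Z}[\omega_3])$. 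The remaining top-row block $-\tau^{*}U$ is then automatically integral, because conjugation fixes $\mathbb{Z}[\omega_3]$ (as $\overline{\omega_3}=\omega_3^2$) and $\mathbb{Z}[\omega_3]$ is a ring, so it contributes no new constraint.

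The heart of the argument is the single entry $(-\|\tau\|^2+it)/2$ in position $(1,4)$. Here $\|\tau\|^2=|\tau_1|^2+|\tau_2|^2$ is a nonnegative rational integer, each $|\tau_j|^2$ being the norm of an element of $\mathbb{Z}[\omega_3]$. Writing $\omega_3=(-1+i\sqrt3)/2$, a complex number $x+iy$ lies in $\mathbb{Z}[\omega_3]=\{a+b\omega_3:a,b\in\mathbb{Z}\}$ exactly when $x=a-b/2$ and $y=b\sqrt3/2$ for integers $a,b$. Applying this to $x=-\|\tau\|^2/2$ and $y=t/2$, the imaginary condition forces $b=t/\sqrt3\in\mathbb{Z}$, i.e. $t\in\sqrt3\mathbb{Z}$, and the real condition forces $a=(t/\sqrt3-\|\tau\|^2)/2\in\mathbb{Z}$, i.e. $t/\sqrt3\equiv\|\tau\|^2\pmod 2$ — exactly the stated parity condition. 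I expect this coupling between the ``vertical'' parameter $t$ and the norm $\|\tau\|^2$ to be the one subtle point: it arises precisely because $\omega_3$ has nonzero real part, so integrality of the real and imaginary parts of the $(1,4)$ entry cannot be decoupled (contrast the Gaussian case, where $i$ is purely imaginary and the two integrality conditions separate).

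Finally I would close the converse. Assuming $r=1$, $\tau\in\mathbb{Z}[\omega_3]^2$, $U\in\mathbf{U}(2;\mathbb{Z}[\omega_3])$, and $t=b\sqrt3$ with $b\equiv\|\tau\|^2\pmod 2$, the same computations show every entry of $P$ lies in $\mathbb{Z}[\omega_3]$. Since $P$ is assembled from a genuine Langlands decomposition it automatically satisfies $P^{*}JP=J$ and has $g_{41}=0$, so it belongs to $\Gamma_{\infty}$ by the characterization of the stabilizer of $\infty$. Hence $P\in\Gamma_{\infty}(3,1;\mathbb{Z}[\omega_3])$, which establishes the equivalence.
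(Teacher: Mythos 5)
Your proposal is correct and follows essentially the same route as the paper: read off the conditions by forcing each entry of the Langlands form \eqref{DeofP} to lie in $\mathbb{Z}[\omega_3]$, with the parity condition coming from integrality of the $(1,4)$ entry $(-\|\tau\|^2+it)/2$. You simply supply the details (the $r=1$ step, closure of $\mathbb{Z}[\omega_3]$ under conjugation for the $-\tau^*U$ block, and the explicit converse) that the paper's one-line proof leaves implicit.
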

\begin{proof}
Let $P\in\Gamma_{\infty}(3,1;\mathbb{Z}[\omega_3])$ be the Langlands decomposition form (2). Then it is easy to see that $r=1$, $t\in\sqrt{3}\mathbb{Z}$ and $U \in \mathbf{U}(2; \mathbb{Z}[\omega_{3}])$.  Since the entries $\tau_{1},\tau_{2}$ of $\tau$ and the entry $(-\|\tau\|^2+it)/2$ are in the ring $\mathbb{Z}[\omega_3]$,
we get that  $t/\sqrt{3}\in
\mathbb{Z}$ and $|\tau_{1}|^2+|\tau_{2}|^2\in\mathbb{Z}$. Further more, they have the same parity.
\end{proof}

\begin{proposition} Let $\Gamma_{\infty}(3,1;\mathbb{Z}[\omega_3])$ be stated as above. Then $\Gamma_{\infty}(3,1;\mathbb{Z}[\omega_3])$  is generated
by the Heisenberg translation $N_{1}=N_{((1,0)^T,\sqrt{3})}$ and the Heisenberg rotations $M_{U_{i}}(i=1,2)$.
\end{proposition}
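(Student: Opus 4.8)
The plan is to show that an arbitrary element $P \in \Gamma_{\infty}(3,1;\mathbb{Z}[\omega_3])$ can be written as a word in $N_1$, $M_{U_1}$, $M_{U_2}$. By the previous lemma, $P$ has Langlands decomposition $P = N_{(\tau,t)} A_1 M_U$ with $r=1$, so $P = N_{(\tau,t)} M_U$ where $\tau \in \mathbb{Z}[\omega_3]^2$, $t \in \sqrt{3}\mathbb{Z}$, $U \in \mathbf{U}(2;\mathbb{Z}[\omega_3])$, and $t/\sqrt{3}$ and $\|\tau\|^2$ have the same parity. Since the rotations $M_{U_1}, M_{U_2}$ generate all of $\mathbf{U}(2;\mathbb{Z}[\omega_3])$ by Lemma 3.1, it suffices to generate the pure translation part; that is, I need to show the group $T$ of Heisenberg translations $N_{(\tau,t)}$ satisfying the parity condition is generated by $N_1$ together with conjugates of $N_1$ by the rotation group.

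First I would record how conjugation by a rotation moves the translation: $M_U N_{(\tau,t)} M_U^{-1} = N_{(U\tau, t)}$, which follows directly from the matrix forms in the decomposition \eqref{DeofP}. Thus conjugating $N_1 = N_{((1,0)^T,\sqrt{3})}$ by the rotations lets me move the vector $(1,0)^T$ to $(0,1)^T$ (via $U_1$) and to multiply either coordinate by $-\omega_3$ and hence by any unit $\pm 1, \pm\omega_3, \pm\omega_3^2$ (via $U_2$ and its powers). Since $\{1,\omega_3\}$ is a $\mathbb{Z}$-basis for $\mathbb{Z}[\omega_3]$, these conjugates generate translations by every standard basis direction scaled by any element of $\mathbb{Z}[\omega_3]$, which is the key to reaching an arbitrary $\tau$.

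Next I would track the vertical coordinate under composition. From the group law, $N_{(\tau,t)} N_{(\tau',t')} = N_{(\tau+\tau', t+t'+2\Im\langle\langle \tau,\tau'\rangle\rangle)}$, so composing translations adds the vectors and adds the heights plus a symplectic correction term. The strategy is: using the conjugates of $N_1$ as building blocks, first build a translation whose vector is exactly the desired $\tau$; the accumulated height will then be some $t_0 \in \sqrt{3}\mathbb{Z}$ automatically satisfying the parity condition (since each generator does and the correction terms $2\Im\langle\langle\cdot,\cdot\rangle\rangle$ are computed to lie in $\sqrt{3}\mathbb{Z}$ with controlled parity). Finally I would correct the height from $t_0$ to the target $t$ using the central element $N_1^2 = N_{(0, 2\sqrt{3})}$ together with the commutator $[N_{(e_1,\sqrt 3)}, N_{(e_2,\sqrt3)}]$, which is a pure vertical translation; I must verify these central translations generate exactly the heights $t - t_0$ allowed by the matching-parity constraint.

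The main obstacle will be the parity bookkeeping in the vertical direction. It is not enough to reach the correct $\tau$ and then adjust $t$ freely: the vertical translations available as products and commutators of the generators form a specific subgroup of $\sqrt{3}\mathbb{Z}$ (I expect $2\sqrt{3}\mathbb{Z}$ from squares plus an odd contribution from the commutator), and I must confirm that the residue $t - t_0 \bmod 2\sqrt{3}$ is always realizable precisely because the parity condition links $t/\sqrt 3$ to $\|\tau\|^2$. Carefully computing $\Im\langle\langle e_j, \omega_3^k e_i\rangle\rangle$ and showing the correction terms conspire to keep the parity invariant throughout the construction is the delicate step; the rest is routine.
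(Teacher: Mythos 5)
Your overall strategy coincides with the paper's: reduce to the translation part via the Langlands decomposition, realize translations in each coordinate direction (scaled by units of $\mathbb{Z}[\omega_3]$) as rotation-conjugates of $N_1$, and then absorb the leftover vertical translation using the parity condition linking $t/\sqrt{3}$ to $\|\tau\|^2$. The paper carries this out by splitting $N_{(\tau,t)}$ explicitly into four powers of conjugates of $N_1$ times a central factor, and verifies the parity claim by the computation $\|\tau\|^2=a_1^2-a_1b_1+b_1^2+a_2^2-a_2b_2+b_2^2$.

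However, both of the specific elements you propose for producing the central translation $N_{((0,0)^T,2\sqrt{3})}$ fail. First, $N_1^2\neq N_{((0,0)^T,2\sqrt{3})}$: since $\langle\langle (1,0)^T,(1,0)^T\rangle\rangle=1$ is real, the group law gives $N_1^2=N_{((2,0)^T,\,2\sqrt{3})}$, which is not a vertical translation. Second, the commutator $[N_{(e_1,\sqrt{3})},N_{(e_2,\sqrt{3})}]$ equals $N_{((0,0)^T,\,4\Im\langle\langle e_1,e_2\rangle\rangle)}=N_{((0,0)^T,0)}$, the identity, because $e_1$ and $e_2$ are orthogonal for the standard Hermitian form; it yields no vertical translation at all. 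The correct source of the center is a commutator of two translations whose vectors lie in the \emph{same} complex coordinate but differ by a non-real unit, so that the symplectic pairing is nonzero: the paper uses $[N_1,\,M_{U_2}N_1M_{U_2}^{-1}]=N_{((0,0)^T,\,4\Im(-\overline{\omega_3}))}=N_{((0,0)^T,2\sqrt{3})}$. Without this (or an equivalent), your height-correction step has no mechanism to act, so the parity bookkeeping you rightly flag as the delicate point cannot be completed. With that replacement, the rest of your plan goes through essentially as in the paper.
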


\begin{proof}  Suppose $P\in\Gamma_{\infty}(3,1;\mathbb{Z}[\omega_3])$. According to Lemma 3.2, there is no dilation
component in its Langlands decomposition, that is
$$
P=N_{(\tau,t)}M_U=\left(
  \begin{array}{ccc}
    1 & -{\tau}^* & (-||\tau||^2+it)/2 \\
    0 & I_{2} & \tau \\
    0 & 0 & 1 \\
  \end{array}
\right)\left(
         \begin{array}{ccc}
           1 & 0 & 0 \\
           0 & U& 0 \\
           0 & 0 & 1 \\
         \end{array}
       \right).
$$
Since the unitary matrix $U\in \mathbf{U}(2;\mathbb{Z}[\omega])$. Then the rotation component of $P$ in
the Langlands decomposition is generated by $M_{U_{i}}(i=1,2)$ by Lemma 3.1.

We now consider the Heisenberg translation part of $P$, $N_{(\tau,t)}$.
Let $$\tau=(a_{1}+b_{1}\omega_3, a_{2}+b_{2}\omega_3)^{T},$$ where $a_{1}, b_{1}, a_{2}, b_{2}\in\mathbb{Z}$, since
$\tau\in\mathbb{Z}[\omega_3]^{2}$. Then $N_{(\tau,t)}$ splits as
\begin{equation}
\begin{split}
N_{(\tau,t)}&=N_{((a_{1}+b_{1}\omega_3, a_{1}+b_{1}\omega_3)^T,t)}\\
&=N_{((a_{1},0)^T,\sqrt{3}a_{1})}\circ
N_{((b_{1}\omega_3, 0)^T,\sqrt{3}b_{1})}\circ N_{((0, a_{2})^T,\sqrt{3}a_{2})}\circ N_{((0, b_{2}\omega_3)^T,\sqrt{3}b_{2})}\\
&\quad \circ N_{((0,0)^T,t+\sqrt{3}a_{1}b_{1}-\sqrt{3}a_{1}-\sqrt{3}b_{1}+\sqrt{3}a_{2}b_{2}-\sqrt{3}a_{2}-\sqrt{3}b_{2})}.
\end{split}
\end{equation}
Here the Heisenberg translations
$$N_{((a_{1},0)^T,\sqrt{3}a_{1})},
N_{((b_{1}\omega_3, 0)^T,\sqrt{3}b_{1})}, N_{((0, a_{2})^T,\sqrt{3}a_{2})},N_{((0, b_{2}\omega_3)^T,\sqrt{3}b_{2})}$$
can be written as follows
\begin{equation}
\begin{split}
N_{((a_{1},0)^T,\sqrt{3}a_{1})}&=N^{a_{1}}_{((1,0)^T,\sqrt{3})},\\
N_{((b_{1}\omega_3, 0)^T,\sqrt{3}b_{1})}&=N^{b_{1}}_{((\omega_3, 0)^T,\sqrt{3})},\\
N_{((0, a_{2})^T,\sqrt{3}a_{2})}&=N^{a_{2}}_{((0, 1)^T,\sqrt{3})},\\
N_{((0, b_{2}\omega_3)^T,\sqrt{3}b_{2})}&=N^{b_{2}}_{((0, \omega_3)^T,\sqrt{3})}.
\end{split}
\end{equation}

We claim that the number
$$\left(t-\sqrt{3}(-a_{1}b_{1}+a_{1}+b_{1}-a_{2}b_{2}+a_{2}+b_{2})\right)/2\sqrt{3}$$ is an integer, namely,
$$\frac{t}{\sqrt{3}}-(-a_{1}b_{1}+a_{1}+b_{1}-a_{2}b_{2}+a_{2}+b_{2})\in 2\mathbb{Z}.$$  According to Lemma 3.2,
the integers $t/\sqrt{3}$ and
\begin{equation}
\begin{split}
\|\tau\|^2&=|a_{1}+b_{1}\omega_3|^2+|a_{2}+b_{2}\omega_3|^2\\
&=a_{1}^2-a_{1}b_{1}+b_{1}^2+a_{2}^2-a_{2}b_{2}+b_{2}^2
\end{split}
\end{equation}
 have the same parity. It can be
easily seen that $$a_{1}^2-a_{1}b_{1}+b_{1}^2+a_{2}^2-a_{2}b_{2}+b_{2}^2+(-a_{2}b_{2}+a_{2}+b_{2})+(-a_{1}b_{1}+a_{1}+b_{1})$$ $$=a_{1}(a_{1}+1)+b_{1}(b_{1}+1)+a_{2}(a_{2}+1)+b_{2}(b_{2}+1)-2a_{1}b_{1}-2a_{2}b_{2}\in
2\mathbb{Z}.$$

 Hence $t/\sqrt{3}$ and $-a_{1}b_{1}+a_{1}+b_{1}-a_{2}b_{2}+a_{2}+b_{2}$ have the same
parity. This proves that $$t_{1}=\frac{t-\sqrt{3}(-a_{1}b_{1}+a_{1}+b_{1}-a_{2}b_{2}+a_{2}+b_{2})}{2\sqrt{3}} \in
\mathbb{Z}.$$

Therefore the Heisenberg translation $$N_{((0,0)^T,t+\sqrt{3}a_{1}b_{1}-\sqrt{3}a_{1}-\sqrt{3}b_{1}+\sqrt{3}a_{2}b_{2}-\sqrt{3}a_{2}-\sqrt{3}b_{2})}$$
can be written as
$$N_{((0,0)^T,2\sqrt{3})}^{t_{1}}.$$
The Heisenberg translation $N_{((0,0)^T,2\sqrt{3})}$ can be
generated by $N_{((1,0)^T,\sqrt{3})}$ and $M_{U_{2}}$, that is
$$
N_{((0,0)^T,2\sqrt{3})}=[N_{1}, M_{U_{2}}N_{1}M_{U_{2}}^{-1}].
$$

We also note that $$N_{((0, 1)^T,\sqrt{3})}=M_{U_{1}}N_{1}M_{U_{1}}, N_{((\omega_3, 0)^T,\sqrt{3})}=M^{-2}_{U_{2}}N_{1}M^{2}_{U_{2}}$$    and $$N_{((0, \omega_3)^{T},\sqrt{3})}=M_{U_{1}}N_{((\omega_3, 0)^T,\sqrt{3})}M_{U_{1}}.$$
This proposition is proved.
\end{proof}

Now we prove our main result.
\begin{theorem}\label{thm:label}
The Picard modular group $\mathbf{U}(3,1;\mathbb{Z}[\omega_3])$ is generated
by  Heisenberg translation $N_{1}$,
two Heisenberg rotations $M_{U_{i}}(i=1,2)$ and the involution $R$.
\end{theorem}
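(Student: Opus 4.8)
The plan is to prove that the subgroup $\Gamma=\langle N_{1},M_{U_{1}},M_{U_{2}},R\rangle$ is all of $\mathbf{U}(3,1;\mathbb{Z}[\omega_3])$ by a Euclidean descent on the lower-left entry. Given $G=(g_{jk})\in\mathbf{U}(3,1;\mathbb{Z}[\omega_3])$, the complexity I would track is the nonnegative rational integer $|g_{41}|^{2}$, the $\mathbb{Z}[\omega_3]$-norm of $g_{41}$. By Lemma 2.1 we have $g_{41}=0$ precisely when $G\in\Gamma_{\infty}$, and the preceding Proposition shows that every element of $\Gamma_{\infty}(3,1;\mathbb{Z}[\omega_3])$ already lies in $\langle N_{1},M_{U_{1}},M_{U_{2}}\rangle\subset\Gamma$. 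Hence it suffices to show: whenever $g_{41}\neq0$, there is a word $W$ in $N_{1},M_{U_{1}},M_{U_{2}},R$ with $|(WG)_{41}|^{2}<|g_{41}|^{2}$. Since this produces a strictly decreasing sequence of nonnegative integers, finitely many reductions land us in $\Gamma_{\infty}$, whence the theorem follows.

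The reduction step rests on two observations about how the generators move the first column $\mathbf{v}=(g_{11},g_{21},g_{31},g_{41})^{T}$. First, $\mathbf{v}=G\psi(\infty)$ is a null vector, so the Hermitian form yields the relation $2\,\Re(\overline{g_{11}}\,g_{41})+|g_{21}|^{2}+|g_{31}|^{2}=0$. Second, left multiplication by any $P\in\Gamma_{\infty}$ fixes $g_{41}$ (its last row is $(0,0,0,1)$), whereas $R$ interchanges the top and bottom entries, so $(RG)_{41}=g_{11}$. Thus the strategy is to use translations and rotations from $\Gamma_{\infty}$ to make $|g_{11}|$ small relative to $|g_{41}|$, and then apply $R$ to install this small value in the $(4,1)$ slot.

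Concretely, since $\mathbb{Z}[\omega_3]$ is Euclidean with Euclidean minimum $\tfrac13$, I would first choose a horizontal translation $N_{(\tau,\ast)}$, $\tau\in\mathbb{Z}[\omega_3]^{2}$, replacing $(g_{21},g_{31})$ by remainders modulo $g_{41}$ with $|g_{21}|^{2},|g_{31}|^{2}\le\tfrac13|g_{41}|^{2}$; the null relation then forces $|\Re(\overline{g_{11}}g_{41})|\le\tfrac13|g_{41}|^{2}$. Next, writing $u=\overline{g_{11}}g_{41}$, a vertical translation by a power of $N_{((0,0)^{T},2\sqrt3)}=[N_{1},M_{U_{2}}N_{1}M_{U_{2}}^{-1}]$ shifts $\Im(u)$ by integer multiples of $\sqrt3\,|g_{41}|^{2}$, so it can be brought into the range $|\Im(u)|\le\tfrac{\sqrt3}{2}|g_{41}|^{2}$ without disturbing $\Re(u)$. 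Combining the two bounds gives
\begin{equation*}
|g_{11}|^{2}\,|g_{41}|^{2}=|u|^{2}=\Re(u)^{2}+\Im(u)^{2}\le\Big(\tfrac19+\tfrac34\Big)|g_{41}|^{4}=\tfrac{31}{36}|g_{41}|^{4},
\end{equation*}
so the reduced entry satisfies $|g_{11}|^{2}\le\tfrac{31}{36}|g_{41}|^{2}<|g_{41}|^{2}$. Applying $R$ then yields $|(RWG)_{41}|^{2}<|g_{41}|^{2}$, completing one descent step. Each auxiliary translation used here is, together with a suitable parity-compatible vertical component, an element of $\Gamma_{\infty}(3,1;\mathbb{Z}[\omega_3])$, hence lies in $\langle N_{1},M_{U_{1}},M_{U_{2}}\rangle$ by the Proposition.

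The step I expect to be the main obstacle is exactly the sharp estimate above: one must control the real part through the null relation (which is where the Euclidean constant $\tfrac13$ for $\mathbb{Z}[\omega_3]$ enters) and control the imaginary part independently through the vertical translations $N_{((0,0)^{T},2\sqrt3)}$, while checking that the two reductions do not interfere. The remaining bookkeeping—verifying that every chosen translation, with its forced vertical correction dictated by the parity condition of Lemma 3.2, genuinely lies in the generated subgroup—is routine but must be tracked carefully so that the entire descent stays inside $\Gamma$.
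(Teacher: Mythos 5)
Your proposal is correct and follows essentially the same route as the paper: a Euclidean descent on $|g_{41}|^{2}$ in which $\tau$ is chosen via the hexagonal Dirichlet domain of $\mathbb{Z}[\omega_3]$ to bound the real part by $\tfrac13$ through the null relation, $t$ is chosen in the parity-constrained coset of $2\sqrt3\,\mathbb{Z}$ to bound the imaginary part by $\tfrac{\sqrt3}{2}$, yielding the same contraction factor $\tfrac{31}{36}$ before invoking Lemma 2.1 and Proposition 3.1. Your reformulation of the paper's direct computation of $(RN_{(\tau,t)}G)_{41}$ as ``shrink $g_{11}$, then swap with $R$'' is only a presentational difference.
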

\begin{proof} Let $G=(g_{jk})^4_{j,k=1}$ be an
element of the group $\mathbf{U}(3,1;\mathbb{Z}[\omega_3])$. We only need to consider $G\notin \Gamma_{\infty}(3,1;\mathbb{Z}[\omega_3])$.
According to Lemma 2.1, we have $g_{41}\neq 0$. $G$ maps  the point $\infty$ to the point $(g_{11}/g_{41},g_{21}/g_{41},g_{31}/g_{41})$.
Since $G(\infty)$ is in $\partial\mathbf{H}_{\mathbb{C}}^{3}$, then
\begin{equation}2\Re\left(\frac{g_{11}}{g_{41}}\right)=-\left|\frac{g_{21}}{g_{41}}\right|^{2}-\left|\frac{g_{31}}{g_{41}}\right|^{2}\end{equation}
Consider the Heisenberg translation $N_{G(\infty)}$ that maps $(0,0)$ to $G(\infty)$. The translation $N_{G(\infty)}$ is not necessary in
the Picard modular group $\mathbf{U}(3,1;\mathbb{Z}[\omega_3])$. However, we know that
$R N_{G(\infty)}^{-1}G$ belongs to the stabilizer subgroup of $\infty$.
So we will successively approximate $N^{-1}_{G(\infty)}$ by Heisenberg
translations in the Picard modular group to decrease the value
$|g_{41}|^2\in\mathbb{Z}$ until it becomes $0$. Then $G$  can be
expressed as a product of the generators in $\Gamma_{\infty}(3,1;\mathbb{Z}[\omega_3])$ and $R$. Since the ring $\mathbb{Z}[\omega_3]$ is
Euclidean, this approximation process has finitely many steps.

Next we calculate the entry in the lower left corner of the product
\begin{equation}
\begin{split}
G_1&=R N_{(\tau,t)}G\\
&=\left(
\begin{array}{cccc}
 0 & 0 & 0&1 \\
 0 & -1 & 0 &0\\
 0 & 0& -1&0\\
 1 & 0& 0&0
 \end{array}
 \right)\left(\begin{array}{cccc}
 1 & -\overline{\tau_{1}} & -\overline{\tau_{2}}&(-\|\tau\|^{2}+it)/2\\
 0 & 1 & 0 &\tau_{1}\\
 0 & 0& 1&\tau_{2}\\
 0& 0& 0&1
 \end{array} \right)G\\
&= \left(
\begin{array}{cccc}
  0& 0 & 0&1 \\
  0& -1& 0 &-\tau_{1}\\
  0& 0&-1 &-\tau_{2}\\
 1 & -\overline{\tau_{1}}& -\overline{\tau_{2}}&(-\|\tau\|^{2}+it)/2
 \end{array}
 \right)G
 \end{split}
 \end{equation}
 It follows that the entry $g^{(1)}_{41}$ of $G_{1}$ is equal to
 \begin{equation}
\begin{split}
g^{(1)}_{41}&= g_{11}-g_{21}\bar{\tau_{1}}-g_{21}\bar{\tau_{2}}+g_{41}\frac{-\|\tau\|^2+it}{2} \\
 &=g_{41}\left(\frac{g_{11}}{g_{41}}-\frac{g_{21}}{g_{41}}\bar{\tau_{1}}-\frac{g_{31}}{g_{41}}\bar{\tau_{2}}+\frac{-\|\tau\|^2+it}{2}\right)\\
&=-g_{41}\bigg[\left(-\Re\left(\frac{g_{11}}{g_{41}}\right)+\Re\left(\frac{g_{21}}{g_{41}}\bar{\tau_{1}}\right)+
\Re\left(\frac{g_{31}}{g_{41}}\bar{\tau_{2}}\right)+\frac{\|\tau\|^2}{2}\right)\\
&\ \ \ \ -i\left(\Im\left(\frac{g_{11}}{g_{41}}\right)-\Im\left(\frac{g_{21}}{g_{41}}\bar{\tau_{1}}\right)
-\Im\left(\frac{g_{31}}{g_{41}}\bar{\tau_{2}}\right)+\frac{t}{2}\right)\bigg]\\
&=-g_{41}(I_1-iI_2).
\end{split}
 \end{equation}
Using (6), we can simplify $I_1$ to
$$
I_1=\frac{1}{2}\left(\left|\frac{g_{21}}{g_{41}}+\tau_{1}\right|^2+\left|\frac{g_{31}}{g_{41}}+\tau_{2}\right|^2\right).
$$

Let $\frac{g_{21}}{g_{41}}=x_{1}+y_{1}\omega_3$, $\frac{g_{31}}{g_{41}}=x_{2}+y_{2}\omega_3$, $x_{1},y_{1},x_{2},y_{2}\in \mathbb{R}$. Note that
$$\tau=(\tau_{1},\tau_{2})=(a_{1}+b_{1}\omega_3,a_{2}+b_{2}\omega_3)^{T},$$ where $a_{1},b_{1}, a_{2},b_{2}\in \mathbb{Z}$.
In each copy of $\mathbb{C}$, we can simply choose $a_{j}$ and $b_{j}$ so that $a_{j}+b_{j}\omega_3$ is a point in $\mathbb{Z}[\omega_3]$  so that
$\tau_{j}+x_{j}+y_{j}\omega_3$ is as close to the origin as possible. In other words it lies in (Euclidean) Dirichlet domain for the lattice
$\mathbb{Z}[\omega_3]$ centred at the origin. This is a hexagon whose vertices are at
$$\frac{1}{2}+\frac{i}{2\sqrt{3}},\quad \frac{i}{\sqrt{3}},\quad -\frac{1}{2}+\frac{i}{2\sqrt{3}},\quad -\frac{1}{2}-\frac{i}{2\sqrt{3}},\quad -\frac{i}{\sqrt{3}},\quad \frac{1}{2}-\frac{i}{2\sqrt{3}}.$$
All points of this hexagon are at most a distance $1/\sqrt{3}$ from the origin.

Hence, we obtain the upper bound
\begin{equation}
\begin{split}
|I_1|&=\frac{1}{2}\left(\left|x_{1}+y_{1}\omega_3+a_{1}+b_{1}\omega_3\right|^2+\left|x_{2}+y_{2}\omega_3+a_{2}+b_{2}\omega_3\right|^2\right)\\
&\leq
\frac{1}{2}(\frac{1}{3}+\frac{1}{3})\\
&=\frac{1}{3}.
\end{split}
 \end{equation}

The above estimate for $I_{1}$ was suggested by the referee. Since $t/\sqrt{3}\in \mathbb{Z}$ and $|\tau_{1}|^2+|\tau_{2}|^2\in\mathbb{Z}$ have the same parity,  $t/\sqrt{3}$ is an odd or even  number.
In both cases, we can get the inequality
$$|I_2|=\left|\Im\left(\frac{g_{11}}{g_{41}}\right)-\Im\left(\frac{g_{21}}{g_{41}}\bar{\tau_{1}}\right)
-\Im\left(\frac{g_{31}}{g_{41}}\bar{\tau_{2}}\right)+\frac{t}{2}\right|
\leq \frac{\sqrt{3}}{2}$$ by selecting some $t$ in $I_2$.
Therefore, we have the estimation of $g^{(1)}_{41}$
$$
|g^{(1)}_{41}|^2=|g_{41}|^2|I_1+iI_2|^2=|g_{41}|^2(I^2_1+I^2_2)\leq
|g_{41}|^2\left[\left(\frac{1}{3}\right)^2+\left(\frac{\sqrt{3}}{2}\right)^2\right]=\frac{31}{36}|g_{41}|^2.
$$

We can reduce the matrix of
the transformation $G$ to the matrix of a transformation $G_n$ with
$g^{(n)}_{41}=0$ by repeating this approximation procedure finitely
many times. According to Lemma 2.1, this condition implies
that the $G_n$ belongs to the subgroups $\Gamma_{\infty}$. As we
shown in Proposition 3.1, the subgroup $\Gamma_{\infty}$ can be
generated by the Heisenberg translations $N_{1}$
and the Heisenberg rotations $M_{U_{i}}(i=1,2)$.
Since the approximation procedure just uses the transformations in
$\Gamma_{\infty}$ and involution $R$. Hence the proof of
Theorem 3.2 is completed.
\end{proof}
\begin{rem}
It would be interesting to know if this method can be extended to the other higher dimensional Picard modular groups.
\end{rem}
\section*{Acknowledgements}
We would like to thank the referee for his/her careful reading of the manuscript and for making
constructive suggestions.  These have greatly improved the paper.
This work was partially supported by NSF(No.11071059) and
B. Xie also supported by Tianyuan Foundation(No.11126195), NSF(No.11201134) and 'Young teachers support program' of Hunan University.


\begin{thebibliography}{2}

\bibitem{FFP}E. Falbel, G. Francsics, J.R. Parker, \emph{The geometry of Gauss-Picard modular group},  Math. Ann. \textbf{349}(2011), 459--508.

\bibitem{FFLP}E. Falbel, G. Francsics, P.D. Lax, J.R. Parker,
\emph{Generators of a Picard modular group in two complex dimensions},
 Proc. Amer. Math. Soc. \textbf{139}(2011), 2439--2447.

\bibitem{FL1} G. Francsics, P. Lax, \emph{A semi-explicit fundamental
  domain for a Picard modular group in complex hyperbolic space},
   Contemp. Math. \textbf{238}(2005), 211--226.

\bibitem{FL2} G. Francsics, P. Lax, \emph{An explicit fundamental
  domain for a Picard modular group in complex hyperbolic space},
  Preprint (2005).
\bibitem{FL3} G. Francsics, P. Lax, \emph{Analysis of a Picard modular group}, Proc. of National Academy of Sciences USA, \textbf{103}(2006), 11103--11105.

\bibitem{FP}E. Falbel, J. R. Parker, \emph{The geometry of the Eisenstein-Picard
modular group},  Duke Math. J. \textbf{131}(2006), 249--289.

\bibitem{Go} W. M. Goldman, \emph{Complex Hyperbolic Geometry}
(Clarendon, Oxford, 1999).

\bibitem{HO} R. P. Holzapfel, \emph{Invaiants of arithmetric ball quotient sufraces},
 Math. Nachr. \textbf{103}(1981), 117--153.

\bibitem{HW} G. H. Hardy, E. M. Wright, \emph{ An introduction to the theorey
of numbers} (Clarendon, Oxford, 1954).
\bibitem{Par} J. R. Parker, {\it Notes on Complex Hyperbolic
Geometry} (Preprint, 2003).
\bibitem{ST} I.N. Stewart, D.O. Tall, \emph{ Algebraic number theory}, (Chapman and Hall Ltd., 1979).
\bibitem{wxx}J. Wang, Y. Xiao and B. Xie, \emph{Generators of the Eisenstein-Picard
modular groups}, J. Aust. Math. Soc. \textbf{91}(2011), 421--429.

\bibitem{Zhao}T. Zhao, \emph{Generators for the
Euclidean Picard modular groups}, Tran. Amer. Math. Soc. \text{364}(2012), 3241--3263.



\end{thebibliography}
\end{document}